\definecolor{red}{rgb}{1,0,0}
\definecolor{blue}{rgb}{.2,.2,.8}
\def\m{\mu}
\def\a{\alpha}
\def\b{\beta}
\def\T{\mathcal T_r}
\def\P{\mathcal P}
\def\DD{\mathcal D_{1,r}}
\def\A{\mathcal O_{1,r}}
\def\FF{\mathcal F_{1,r}}
\def\D{\mathcal D_r}
\def\F{\mathcal F_r}
\def\OO{\mathcal O_r}
\def\ost{\mathcal{O}^*_{r,t}}
\def\fbar{\overline{\mathcal F}_{r,t}}
\def\P{\mathcal P_{r,t}}
\newtheorem{theorem}{Theorem}[section]
\newtheorem{corollary}[theorem]{Corollary}
\theoremstyle{definition}
\newtheorem{definition}{Definition}
\newtheorem{example}{Example}
\newcommand{\ds}{\displaystyle}
\newcommand{\tpmod}[1]{{\@displayfalse\pmod{#1}}}
\begin{document}

\title[A third Beck-Type Identity]{Beck-type identities: new combinatorial proofs and a theorem for parts congruent to $t$ mod $r$}
\author{Cristina Ballantine}\address{Department of Mathematics and Computer Science\\ College of the Holy Cross \\ Worcester, MA 01610, USA \\} 
\email{cballant@holycross.edu} 
\author{Amanda Welch} \address{Department of Mathematics and Computer Science\\ College of the Holy Cross \\ Worcester, MA 01610, USA \\} \email{awelch@holycross.edu}

\maketitle


\begin{abstract} 

Let $\mathcal O_r(n)$ be the set of $r$-regular partitions of $n$, $\mathcal D_r(n)$ the set of partitions of $n$ with parts repeated at most $r-1$ times, $\mathcal O_{1,r}(n)$ the set of partitions with exactly one part (possibly repeated) divisible by $r$, and let $\mathcal D_{1,r}(n)$ be the set of partitions in which exactly one part appears at least $r$ times. 
If $E_{r, t}(n)$ is the excess in the number of parts congruent to $t \pmod r$ in all partitions in $\mathcal O_r(n)$ over the number of different parts appearing at least $t$ times in all partitions in $\mathcal D_r(n)$, then $E_{r, t}(n) = |\mathcal O_{1,r}(n)| = |\mathcal D_{1,r}(n)|$. We prove this analytically and combinatorially using a bijection due to Xiong and Keith. As a corollary, we obtain the first Beck-type identity, i.e.,   the excess in the number of parts in all partitions in $\mathcal{O}_r(n)$ over the number of parts in all partitions in $\mathcal{D}_r(n)$ equals $(r - 1)|\mathcal{O}_{1,r}(n)|$ and also $(r - 1)|\mathcal{D}_{1,r}(n)|$. Our work provides a new combinatorial proof of this result that does not use Glaisher's bijection. We also give a new combinatorial proof based of the Xiong-Keith bijection for a second Beck-Type identity that has been proved previously using Glaisher's bijection. 

\end{abstract}

{\bf Keywords:} partitions,  Beck-type identities, parts in partitions

{\bf MSC 2010:}  05A17,  11P83

\section{Introduction}

Let $n$ be a non-negative integer.  A \textit{partition} $\lambda$ of $n$ is a non-increasing sequence of positive integers $\lambda=(\lambda_1, \lambda_2, \ldots, \lambda_\ell)$ that add up to $n$, i.e., $\ds\sum_{i=1}^\ell\lambda_i=n$. The numbers $\lambda_i$ are called the \textit{parts} of $\lambda$ and $n$ is called the \textit{size} of $\lambda$. The number of parts of the partition is called the \textit{length} of $\lambda$ and is denoted by $\ell(\lambda)$. 

We will also use the exponential notation for parts in a partition. The exponent of a part is the multiplicity of the part in the partition. For example, $(5^2, 4, 3^3, 1^2)$ denotes the partition $(5,5, 4, 3, 3, 3, 1, 1)$. Mostly, we will use the exponential notation when referring to rectangular partitions, i.e., partitions in which all parts are equal. Thus, we write $(m^i)$ for the partition consisting of $i$ parts equal to $m$. 

The Ferrers diagram of a partition $\lambda=(\lambda_1, \lambda_2, \ldots, \lambda_{\ell})$ is an array of left justified boxes such that the $i$th row from to top contains $\lambda_i$ boxes. For example, the Ferrers diagram of the partition $(5,5, 3, 3, 2,1)$ is shown below. $$\tiny\ydiagram{5, 5, 3, 3,2, 1}$$

We define several operations on partitions. Given partitions $\lambda=(\lambda_1, \lambda_2, \ldots, \lambda_{\ell(\lambda)})$ and $\m=(\m_1, \m_2, \ldots, \m_{\ell(\m)})$, we define partitions $\lambda\cup \m$, $\lambda+\m$, and $\lambda\m$. 

The partition $\lambda\cup \mu$ is  the partition whose  parts are precisely the parts of $\lambda$ and $\m$, i.e., $\lambda_1, \lambda_2, \ldots, \lambda_{\ell(\lambda)}, \m_1, \m_2, \ldots, \m_{\ell(\m)}$, arranged in non-increasing order. 

The partition $\lambda+\mu$ is  the partition $(\lambda_1+\m_1,\lambda_2+\m_2, \ldots, \lambda_k+\m_k)$, where $k=\max(\ell(\lambda), \ell(\m))$ and, if $\ell(\lambda)<k$ or $\ell(\m)<k$, the respective partition is padded with parts equal to $0$. 

If $\ell(\m)\leq \ell(\lambda)$ and $\m_i\leq \lambda_i$ for all $1\leq i\leq \ell(\lambda)$, we define the partition $\lambda-\mu$ as  the partition $(\lambda_1-\m_1,\lambda_2-\m_2, \ldots, \lambda_k-\m_k)$, where, if $\ell(\m)<\ell(\lambda)$, the  partition $\mu$ is padded with parts equal to $0$, i.e. $\m_{\ell(\mu)+1}=\cdots =\m_{\ell(\lambda)}=0$. 

For a  non-negative integer $n$,  a \textit{composition} $\a$ of $n$ is a sequence of positive integers $\a=(\a_1, \a_2, \ldots, \a_k)$ that add up to $n$. Thus $(3,2, 3, 1)$ and $(3, 1, 3, 2)$ are different compositions of $9$. The sum of compositions  is defined analogous to the sum of partitions. 

Throughout the article, we make use of the following notation.

$\OO(n)$ is the set of  $r$-regular partitions of $n$, i.e., partitions in which no part is divisible by $r$. 

$\D(n)$ is the set of partitions in which no part appears more than $r-1$ times. 

$\F(n)$ is the set of $r$-flat partitions of $n$, i.e., partitions $(\lambda_1, \lambda_2, \ldots, \lambda_k)$ such that for $1\leq i\leq k$ we have $\lambda_i-\lambda_{i+1}\leq r-1$. Here, we set $\lambda_{k+1}=0$.  We refer to $\lambda_i-\lambda_{i+1}$ as a  difference of consecutive parts. 

$\A(n)$ is the set of partitions in which the set of parts  divisible by $r$ has exactly one element (i.e., there is one part divisible by $r$, possibly repeated). 

$\DD(n)$ is the set of partitions in which exactly one part appears at least $r$ times. 

$\FF(n)$ is the set of partition in which exactly one difference of consecutive parts is at least $r$ and all other differences of consecutive parts are at most $r-1$. 

The notation is meant to remind the reader that the partitions in a set with subscript $1,r$ have is a single violation of the rule describing the partitions in the corresponding set with subscript $r$. 

For $1\leq t\leq r-1$, we denote by $E_{r,t}(n)$  the excess in the number of parts congruent to $t \pmod r$ in all partitions in $\OO(n)$ over the number of different parts that appear at least $t$ times in a partition, counted in all partitions in $\D(n)$. 

Given a partition $\lambda$, let $\ell_t(\lambda)$ be the number of parts congruent to $t \pmod r$ in $\lambda$ and let $\overline\ell_t(\lambda)$ be the number of different parts that appear at least $t$ times in $\lambda$ (each counted with multiplicity $1$). Then $$E_{r,t}(n)=\sum_{\lambda\in \OO(n)}\ell_t(\lambda)-\sum_{\lambda\in \D(n)}\overline\ell_t(\lambda).$$

In \cite{B1}, George Beck conjectured a companion identity to Euler's partition identity. Recall that 
Euler's partition identity states that $$|\mathcal O_2(n)|=|\mathcal D_2(n)|.$$ Beck conjectured that \begin{equation}\label{beck1}|\mathcal O_{1,2}(n)|=|\mathcal D_{1,2}(n)|=b(n),\end{equation} where $b(n)$ is the difference between the number of parts in all partitions in $\mathcal O_2(n)$ and the number of parts in all partitions in $\mathcal D_2(n)$.  Andrews proved these identities in \cite{A17}  using generating functions.  Since then,  in a fairly short time, many articles appeared giving generalizations of this result as well as combinatorial proofs in many cases. See for example \cite{FT17, Y18, BB19, LW19, LW19b, LW19c, AB19, BW20}. Some authors have started referring to these companion identities as Beck-type identities. 

Some of the earlier generalizations \cite{Y18} gave companion identities to Glaisher's identity \begin{equation}\label{gl}|\OO(n)|=|\D(n)|.\end{equation} The Beck-type identity is \begin{equation}\label{bg1} |\A(n)|=|\DD(n)|=\frac{1}{r-1}b_r(n),\end{equation} where $b_r(n)$ is the difference between the number of parts in all partitions in $\OO(n)$ and the number of parts in all partitions in $\D(n)$, i.e., $$b_r(n)=\sum_{\lambda\in \OO(n)} \ell(\lambda)-\sum_{\lambda\in \D(n)} \ell(\lambda).$$ 
We refer to these identities as first Beck-type identities. 

 In \cite{FT17}, Fu and Tang gave two generalizations of \eqref{beck1}. For one of the generalizations, Fu and Tang gave a combinatorial proof and, as a particular case, they obtained a combinatorial proof for $$|\mathcal O_{1, r}(n)|=|\mathcal D_{1, r}(n)|.$$
 
So far, all combinatorial proofs of Beck-type idenitities rely on variations of Glaisher's bijection used to prove \eqref{gl}.

The second generalization of \eqref{beck1} provided in \cite{FT17}, for which the authors give a proof using generating functions, is    the following theorem. 

\begin{theorem}[Fu-Tang] \label{FT} For all $n\geq 0$ and $r\geq 2$, $$|\A(n)|=|\DD(n)|=E_{r,1}(n).$$\end{theorem}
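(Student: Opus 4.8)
The plan is to prove the chain of equalities $|\A(n)| = |\DD(n)| = E_{r,1}(n)$ by establishing each piece, and since the statement asserts an identity about counting functions, I would attack it through generating functions, which is the most robust tool for tracking the relevant statistics. First I would set up the generating function for $E_{r,1}(n)$. Recall $E_{r,1}(n) = \sum_{\lambda \in \OO(n)} \ell_1(\lambda) - \sum_{\lambda \in \D(n)} \overline{\ell}_1(\lambda)$, where $\ell_1$ counts parts congruent to $1 \pmod r$ and $\overline{\ell}_1$ counts distinct parts appearing at least once (i.e., simply the number of \emph{distinct} parts). The plan is to introduce a tracking variable that marks the relevant statistic and then differentiate (or extract the coefficient of the tracking variable) to isolate the excess counting function.

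For the $\OO$ side, the generating function for $r$-regular partitions is $\prod_{r \nmid m} (1-q^m)^{-1} = \prod_{m \geq 1} (1-q^{rm})/(1-q^m) = \prod_{m\geq 1}(1-q^{rm})(1-q^m)^{-1}$. To track $\ell_1$, I would weight each part $m \equiv 1 \pmod r$ by an auxiliary variable $z$, forming $\prod_{m \equiv 1\,(r)} (1 - zq^m)^{-1} \prod_{\substack{r\nmid m \\ m \not\equiv 1}} (1-q^m)^{-1}$, then apply $\partial_z$ at $z=1$. For the $\D$ side, partitions with parts repeated at most $r-1$ times have generating function $\prod_{m\geq 1}(1-q^{rm})(1-q^m)^{-1}$ as well (this is exactly Glaisher's identity \eqref{gl}), and to track the number of distinct parts I would weight each part-block by $z$, writing $\prod_{m\geq 1}\bigl(1 + z(q^m + q^{2m} + \cdots + q^{(r-1)m})\bigr)$ and again differentiate at $z=1$. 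Subtracting the two derivatives gives a single generating function $\sum_n E_{r,1}(n) q^n$, and the main computation is to simplify this difference into a clean product. I expect it to collapse to the generating function $\sum_n |\A(n)| q^n$, which is $\dfrac{1}{1-q^r}\,\dfrac{q^r}{1-q^r}\cdot(\text{correction})$—more precisely, the generating function for partitions with exactly one distinct part divisible by $r$ is obtained from that of $\D(n)$ by replacing one unrestricted-multiplicity block by a block forced to be divisible by $r$, namely $\left(\sum_{j\geq 1} q^{rmj}\right)$ summed over which $m$ carries it, giving $\left(\prod_{m\geq 1}(1-q^{rm})(1-q^m)^{-1}\right)\sum_{m\geq 1}\dfrac{q^{rm}}{1-q^{rm}}$.

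The bulk of the work, and the place where I expect the main obstacle, is the algebraic simplification showing that the difference of the two $z$-derivatives equals this last product. The two derivative expressions individually are messy sums over residue classes and over part sizes, and the cancellation that produces the symmetric answer $\sum q^{rm}/(1-q^{rm})$ times the common product is not obvious term by term; it requires carefully recombining the geometric-series pieces, and in particular using the factorization $(1-q^{rm}) = (1-q^m)(1 + q^m + \cdots + q^{(r-1)m})$ repeatedly to match the $\OO$-side contributions against the $\D$-side contributions. I would organize this by writing each side as the common product times a ``logarithmic derivative'' sum, so that the shared product factors out cleanly and only the two bracketed sums need to be compared. Finally, having identified $\sum_n E_{r,1}(n)q^n = \sum_n |\A(n)| q^n$, the equality $|\A(n)| = |\DD(n)|$ follows from the already-available first Beck-type identity \eqref{bg1} (or can be re-derived by the analogous symmetric computation of the generating function for $\DD$), completing the chain. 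Since Theorem~\ref{FT} is attributed to Fu and Tang with a generating-function proof, I am confident this is the intended route; a combinatorial proof via the Xiong--Keith bijection advertised in the abstract would be an attractive alternative, but its construction is exactly the delicate part the paper develops later, so for a self-contained sketch the generating-function argument is the safer plan.
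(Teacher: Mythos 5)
Your proposal is correct and follows essentially the same route as the paper: the paper proves this statement as the $t=1$ case of Theorem~\ref{main}, whose analytic proof in Section~\ref{analytic} uses exactly your plan --- a tracking variable $z$ differentiated at $z=1$ on each side, the common product $\prod_{n\geq 1}(1-q^{rn})/(1-q^n)$ factored out, and the Lambert-series difference collapsing to $\sum_{m\geq 1} q^{rm}/(1-q^{rm})$, which is the generating function for both $|\A(n)|$ and $|\DD(n)|$. Your fallback of computing the $\DD$ generating function directly (rather than invoking \eqref{bg1}) is also what the paper does, so there is no gap.
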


In this article we give a more general theorem of which  Theorem \ref{FT}  is a particular first case. Our main theorem is given below. If $t=1$ we obtain the statement of Theorem \ref{FT}.
\begin{theorem} \label{main} For all integers $n,r,t$ with $n\geq 0$,  $r\geq 2$  and $1\leq t\leq r-1$, we have \begin{equation}\label{beck3}|\A(n)|=|\DD(n)|=E_{r,t}(n).\end{equation}\end{theorem}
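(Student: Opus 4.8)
The plan is to establish the two equalities separately. The equality $|\A(n)| = |\DD(n)|$ is the first Beck-type identity of the $1,r$ type, which the authors state was proved combinatorially by Fu-Tang; I would either cite that or reprove it via the bijection advertised in the abstract (the Xiong-Keith bijection). The genuinely new content is the equality with $E_{r,t}(n)$, so I would focus the argument there. The cleanest route is generating functions, since the theorem is a statement about a signed count (an excess), and generating-function extraction of ``the number of parts of a given type summed over a family'' is routine.

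Let me set up the generating-function machinery. For the statistic side, I would introduce a tracking variable $z$ that marks the relevant parts. On the $\OO$ side, I want to mark parts congruent to $t \pmod r$: the generating function is
\[
O(z,q) = \prod_{\substack{m\geq 1 \\ r\nmid m}} \frac{1}{1 - a_m(z) q^m},
\]
where $a_m(z) = z$ when $m \equiv t \pmod r$ and $a_m(z)=1$ otherwise. On the $\D$ side, I want to mark, for each part size, whether it appears at least $t$ times; a part $m$ allowed up to $r-1$ times contributes the factor $1 + q^m + \cdots + q^{(t-1)m} + z\bigl(q^{tm} + \cdots + q^{(r-1)m}\bigr)$. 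Then
\[
E_{r,t}(n) = [q^n]\,\frac{\partial}{\partial z}\Bigl(O(z,q) - D(z,q)\Bigr)\Big|_{z=1}.
\]
Since $O(1,q) = D(1,q)$ by Glaisher's identity \eqref{gl}, taking the logarithmic derivative and evaluating at $z=1$ turns each product into its common value times a sum of per-factor contributions. The hoped-for outcome is that $\frac{\partial}{\partial z}(O-D)|_{z=1}$ collapses to $\bigl(\prod_{r\nmid m}(1-q^m)^{-1}\bigr)\cdot G(q)$ for an explicit correction series $G(q)$, and the main task is to simplify $G(q)$.

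The target for $G(q)$ is dictated by the other side: I need the generating function for $|\A(n)|$ (equivalently $|\DD(n)|$). For $\A$, exactly one part size divisible by $r$ occurs (at least once, arbitrarily often), and all non-multiples of $r$ occur freely, giving
\[
\sum_{n\geq 0}|\A(n)|\,q^n = \Bigl(\prod_{r\nmid m}\frac{1}{1-q^m}\Bigr)\sum_{k\geq 1}\frac{q^{rk}}{1-q^{rk}}.
\]
So the crux of the proof is the algebraic identity $G(q) = \sum_{k\geq 1} q^{rk}/(1-q^{rk})$. I would verify this by carefully computing the two logarithmic-derivative sums: the $\OO$ side gives a sum over $m\equiv t\pmod r$ of $q^m/(1-q^m)$, while the $\D$ side gives, per part size $m$, the derivative of its finite factor at $z=1$ divided by that factor's value, namely $\bigl(q^{tm}+\cdots+q^{(r-1)m}\bigr)/\bigl(1+\cdots+q^{(r-1)m}\bigr) = \bigl(q^{tm}-q^{rm}\bigr)/\bigl(1-q^{rm}\bigr)$ after telescoping the geometric pieces.

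The step I expect to be the \textbf{main obstacle} is exactly this telescoping simplification: reconciling the $\OO$-contribution (a sum restricted to a single residue class $t$) with the $\D$-contribution (a sum over \emph{all} part sizes of the quantity $(q^{tm}-q^{rm})/(1-q^{rm})$) and showing the difference equals $\sum_{k\geq 1} q^{rk}/(1-q^{rk})$, which is supported only on multiples of $r$. This requires a genuine regrouping of the series by residue classes mod $r$ and a partial-fraction/telescoping cancellation in which the $t$-dependence must drop out of the final answer, matching the $t$-independence of $|\A(n)|$; checking that the $t$-dependent terms cancel across the $r-1$ nonzero residue classes is the delicate bookkeeping. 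Once $G(q)$ is identified, the three-way equality \eqref{beck3} follows by comparing coefficients, and Theorem~\ref{FT} drops out as the case $t=1$. A combinatorial alternative, which the abstract promises, would instead realize $E_{r,t}(n)$ directly as a signed count and cancel matched objects via the Xiong-Keith bijection, leaving a residual set in bijection with $\A(n)$; that avoids the series manipulation but shifts the difficulty onto constructing the sign-reversing involution on the unmatched ``one-violation'' partitions.
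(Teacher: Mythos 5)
Your proposal is correct and matches the paper's analytic proof in Section \ref{analytic} essentially step for step: the same $z$-marked products, the same derivative at $z=1$ yielding the $\OO$-side contribution $\sum_{m\equiv t \tpmod r} q^m/(1-q^m)$ versus the $\D$-side contribution $\sum_{m\geq 1}(q^{tm}-q^{rm})/(1-q^{rm})$, and the same target series $\sum_{k\geq 1} q^{rk}/(1-q^{rk})$ matching the generating function of $|\A(n)|=|\DD(n)|$. The one step you flag as the main obstacle is dispatched in the paper by a single exchange of summation, $\sum_{n\geq 0} \frac{q^{rn+t}}{1-q^{rn+t}} = \sum_{m\geq 1} \frac{q^{mt}}{1-q^{mr}}$, after which the term $\sum_{m\geq 1} q^{tm}/(1-q^{rm})$ cancels exactly against the $\D$-side, leaving $\sum_{m\geq 1} q^{rm}/(1-q^{rm})$ with no residue-class bookkeeping required.
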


 We refer to \eqref{beck3} as a  third Beck-type identity 
  We provide analytic and combinatorial proofs of the theorem. Our combinatorial proof uses a recent bijection of  Xiong and Keith \cite{XK19} for Glaisher's identity \eqref{gl}. Their proof is a variant of a bijection due to Stockhofe \cite{S82}.

Importantly, the first Beck-type identity \eqref{bg1} follows directly from Theorem \ref{main}. Thus, the work of this article provides a new combinatorial proof for \eqref{bg1} that does not use Glaisher's bijection. 

The article is organized as follows. In section \ref{analytic}, we use generating functions to prove Theorem \ref{main}. In section \ref{combinatorial} we introduce Xiong and Keith's bijection and give a combinatorial proof of  Theorem \ref{main}. We also show combinatorially how \eqref{bg1} follows from our main theorem. Finally, in section \ref{beck2}, we give a new combinatorial proof of a second conjecture of George Beck \cite{B2} which was proved analytically in \cite{A17} and generalized in \cite{Y18}.

\section{Analytic Proof of Theorem \ref{main}}\label{analytic}

The generating functions for $|\OO(n)|$ and $|\D(n)|$ are \medskip

$$\ds \sum_{n=0}^\infty|\OO(n)|q^n = \prod_{n=0}^\infty \frac{1}{(1-q^{rn+1})(1-q^{rn+2})\cdots (1-q^{rn+r-1})}= \prod_{n=1}^\infty \frac{1-q^{rn}}{1-q^n};$$

$$\ds \sum_{n=0}^\infty|\D(n)|q^n = \prod_{n=1}^\infty(1+q^n+q^{2n}+\cdots +q^{(r-1)n})=\prod_{n=1}^\infty\frac{1-q^{rn}}{1-q^n}.$$

The generating functions for $|\A(n)|$ and  $|\DD(n)|$ are \medskip

$$\ds \sum_{n=0}^\infty|\A(n)|q^n =  \sum_{n=0}^\infty|\DD(n)|q^n =\sum_{m=1}^\infty \frac{q^{mr}}{1-q^{mr}}\prod_{n=1}^\infty \frac{1-q^{rn}}{1-q^n}.$$

The generating function for the number of parts congruent to $t \pmod r$ in all partitions in $\OO(n)$ is 


$$\left.\frac{\partial}{\partial z}\right|_{z=1} \prod_{n=0}^\infty \frac{1}{(1-q^{rn+1})\cdots (1-q^{rn+t-1})(1-zq^{rn+t})(1-q^{rn+t+1})\cdots (1-q^{rn+r-1})}.$$

The generating function for the number of different parts that appear at least $t$ times in all partitions in $\D(n)$ is 

$$\left.\frac{\partial}{\partial z}\right|_{z=1} \prod_{n=1}^\infty (1+q^n+q^{2n}+\cdots + q^{(t-1)n}+zq^{tn}+zq^{(t+1)n}+\cdots +zq^{(r-1)n}).$$

Then \begin{align*}\sum_{n=0}^\infty E_{r,t}(n)q^n& =   \prod_{n=1}^\infty \frac{1-q^{rn}}{1-q^n} \sum_{n=0}^\infty \frac{q^{rn+t}}{1-q^{rn+t}}- \prod_{n=1}^\infty \frac{1-q^{rn}}{1-q^n} \sum_{n=1}^\infty \frac{q^{tn}-q^{rn}}{1-q^{rn}}\\ & =\prod_{n=1}^\infty \frac{1-q^{rn}}{1-q^n} \left(\sum_{n=0}^\infty \frac{q^{rn+t}}{1-q^{rn+t}}- \sum_{n=1}^\infty \frac{q^{tn}-q^{rn}}{1-q^{rn}}\right)  .\end{align*}

We have $$\sum_{n=0}^\infty \frac{q^{rn+t}}{1-q^{rn+t}}= \sum_{n=0}^\infty\sum_{m=1}^\infty q^{m(rn+t)}= \sum_{m=1}^\infty q^{mt}\sum_{n=0}^\infty q^{mrn}=\sum_{m=1}^\infty\frac{q^{mt}}{1-q^{mr}}.$$
Therefore, $$\sum_{n=0}^\infty E_{r,t}(n)q^n =\prod_{n=1}^\infty \frac{1-q^{rn}}{1-q^n} \sum_{n=1}^\infty \frac{q^{rn}}{1-q^{rn}}.$$

\section{Combinatorial Proof of Theorem \ref{main}}\label{combinatorial}

Recall that the  partition $\lambda=(\lambda_1, \lambda_2, \ldots, \lambda_{\ell(\lambda})$ is called $r$-flat if $\lambda_i-\lambda_{i+1}\leq r-1$ for all $1\leq i\leq \ell(\lambda)-1$ and $\lambda_{\ell(\lambda)}\leq r-1$. I.e., in an $r$-flat partition differences of consecutive parts as well as the smallest part are strictly less than $r$. To make explanations less cumbersome, we set $\lambda_{\ell(\lambda)+1}=0$,  As mentioned in the introduction,  $\F(n)$ is the set of all $r$-flat partitions of $n$.  Conjugation gives a bijection (and, in fact, an involution) from $\F(n)$ to $\D(n)$. 

Next, we introduce a beautiful bijection between the set of $r$-flat partitions and the set of  $r$-regular partitions given by Xiong and Keith in \cite{XK19}. We denote this transformation by $\xi:\F(n)\to\OO(n)$ and  for the remainder of the article we refer to $\xi$ as the Xiong-Keith bijection.
This bijection will be an important building block in the combinatorial proof of Theorem \ref{main}.   


Start with $\lambda \in \F(n)$. 

{\bf Step 1.}  Let $(\mu, \nu)$ be a pair of partitions such that $\lambda=\mu\cup \nu$, $\nu=r\eta$ for some partition $\eta$,  $\mu$ is $r$-flat, and removing any part of $\mu$ congruent to $0  \pmod r$ leaves a partition that is not $r$-flat. If $\mu$ is $r$-regular, let $\b^*=\emptyset$ and go to step 3. 

{\bf Step 2.} Let $(\a, \b)$ be a pair of partitions such that $\mu=\a\cup \b$, $\a$ is $r$-regular and $\b=r\gamma$ for some partition $\gamma$. 
For $1\leq i\leq \ell(\a)$, let $u_i$ be the number of parts in $\b$ that are less than $\a_i$. For $1\leq i\leq \ell(\b)$, let $v_i$ be the number of parts in $\a$ that are greater than $\b_i$. Consider the partition $u=(u_1, u_2, \ldots, u_{\ell(\a)})$ and the composition $v=(v_1, v_2, \ldots, v_{\ell(\b)})$. Let $\a^*=\a-ru$ and $\b^*=\b+rv$. 

{\bf Step 3.}  Write the partition $\nu\cup \beta^*$ as $r\sigma$ and define $\xi(\lambda)=\a^*+r\sigma'\in \OO(n)$. 

In \cite{XK19}, the authors prove that that $\sigma_1\leq \ell(\a^*)$  and they show that $\xi$ is a bijection. Moreover, $\lambda$ and $\xi(\lambda)$   have the same number of parts congruent to $t \pmod r$.

In view of this discussion, $E_{r,t}(n)$ equals  the number of parts congruent to $t \pmod r$ in all partitions in $\F(n)$ minus the number of differences of consecutive parts that are at least $t$ in all partitions in $\F(n)$. Given a partition $\lambda$, denote by $d_t(\lambda)$ the number of differences of consecutive parts of $\lambda$ that are at least $t$. Then $$E_{r,t}(n)=\sum_{\lambda\in \F(n)} \left(\ell_t(\lambda)-d_t(\lambda)\right).$$

Note that it is possible for $\lambda\in \F(n)$ to have $\ell_t(\lambda)-d_t(\lambda)<0$. 

For example, if $r=4, t=2$ and $\lambda=(10,7,7,5,4,3)\vdash 36$, we have $\ell_2(\lambda)=1$ and $d_2(\lambda)=3$ and thus $\ell_t(\lambda)-d_t(\lambda)=-2$. 


When considering examples for fairly large $n$ and $r$, it is often easier  to work with $r$-modular Ferrers diagrams.
\begin{definition} The $r$-modular Ferrers diagram of a partition $\lambda$ is a diagram in which, if $\lambda_i=q_ir+s_i$ with $1\leq s_i\leq r$, then the $i$th row has $q_i$ boxes filled with $r$ and the last box is filled with $s_i$. Note that, if $\lambda_i$ is not divisible by $r$, then $s_i$ is the remainder of $\lambda_i$ upon division by $r$. If $\lambda_i$ is  divisible by $r$, then $s_i=r$.
\end{definition}

\begin{example} The $4$-modular diagram of $\lambda=(10,7,7,5,4,3)$ is $$\small{\young(442,43,43,41,4,3)}.$$
\end{example}

Before proving  Theorem \ref{main}, we show combinatorially that the sets of partitions involved in the theorem are equinumerous with the partitions in $\FF(n)$. 

\begin{theorem} \label{Thm-r,1}For all $n\geq 0$, we have $|\DD(n)| = |\FF(n)|$ and $|\FF(n)| = |\A(n)|$. 

\end{theorem}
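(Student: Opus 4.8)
The plan is to establish the two equalities separately, constructing an explicit bijection for each. For the first, $|\DD(n)|=|\FF(n)|$, I would use conjugation; for the second, $|\FF(n)|=|\A(n)|$, I would peel off a single repeated multiple of $r$ and apply the Xiong--Keith bijection $\xi$ to what remains.

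For $|\DD(n)|=|\FF(n)|$, recall that conjugation already gives the involution $\F(n)\to\D(n)$ quoted above, and the reason is the standard fact that the multiplicity of the part $i$ in $\lambda'$ equals the difference $\lambda_i-\lambda_{i+1}$ of consecutive parts of $\lambda$ (with $\lambda_{\ell(\lambda)+1}=0$). Under this identification a difference of consecutive parts of $\lambda$ that is at least $r$ corresponds exactly to a part of $\lambda'$ whose multiplicity is at least $r$, while differences at most $r-1$ correspond to multiplicities at most $r-1$. Hence $\lambda$ has exactly one difference of consecutive parts that is $\ge r$ if and only if $\lambda'$ has exactly one part repeated at least $r$ times; that is, conjugation restricts to a bijection $\FF(n)\to\DD(n)$.

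For $|\FF(n)|=|\A(n)|$ I would first note that both sides split off a single repeated multiple of $r$. On the $\A$ side this is transparent: removing from $\sigma\in\A(n)$ its unique multiple of $r$, say the value $mr$ occurring $k\ge 1$ times, leaves an $r$-regular partition $\rho\in\OO(n-mrk)$, and $\sigma\mapsto(\rho,m,k)$ is a bijection onto $\bigsqcup_{m,k\ge1}\OO(n-mrk)$ with inverse $\rho\mapsto\rho\cup((mr)^k)$. On the $\FF$ side I would flatten the unique large gap: if $\lambda\in\FF(n)$ has its unique difference $\ge r$ at position $j$, write $d=\lambda_j-\lambda_{j+1}=mr+e$ with $m\ge1$ and $0\le e\le r-1$, subtract $mr$ from each of the top $j$ parts, and record the pair $(m,k)$ with $k=j$. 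One checks the resulting partition $\tl$ is $r$-flat, so this produces a map $\FF(n)\to\bigsqcup_{m,k\ge1}\F(n-mrk)$; its inverse adds $mr$ to the top $k$ parts of an $r$-flat partition (padding with zero parts when $k$ exceeds its length), which reintroduces exactly one large gap. Applying $\xi\colon\F(n-mrk)\to\OO(n-mrk)$ factorwise and composing the three bijections then yields a bijection $\FF(n)\to\A(n)$, so $|\FF(n)|=|\A(n)|$.

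The routine parts are conjugation and the $\A$-side splitting; the step needing genuine care is the flattening bijection on $\FF(n)$. The main obstacle is verifying $r$-flatness and invertibility at the boundary: I must confirm that subtracting $mr$ from the top $j$ parts leaves every difference of consecutive parts (including the smallest part) at most $r-1$, and handle the case where the large gap sits at the final position with $e=0$, so that $\lambda_j=mr$ and the $j$th part is deleted. In that case $k=\ell(\tl)+1$ exceeds the length of $\tl$, and I would show that the zero-padding convention in the inverse map recreates exactly this configuration, so the correspondence remains a bijection. Once this case analysis is settled, the remaining verifications are bookkeeping.
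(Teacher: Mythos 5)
Your proposal is correct and takes essentially the same route as the paper: conjugation gives $\FF(n)\leftrightarrow\DD(n)$, and the paper's map $\varphi$ likewise writes the unique large gap as $\lambda_i-\lambda_{i+1}=rk+d$, subtracts $rk$ from the top $i$ parts, applies $\xi$, and reinserts $i$ copies of $rk$ — your disjoint-union bookkeeping over pairs $(m,k)$ is this same bijection made explicit. One small slip in your boundary analysis: when the gap is at the last position with $e=0$, several trailing parts may equal $mr$ and all vanish under subtraction, so $k$ can exceed $\ell(\tl)+1$ by more than one; your zero-padding inverse already covers this general case, though, since the forward map still uniquely recovers $(m,k)$ from the gap's position and size.
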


\begin{corollary}For all $n\geq 0$, we have $|\DD(n)| = |\A(n)|$.

\end{corollary}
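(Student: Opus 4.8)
The plan is to prove the two equalities separately: $|\DD(n)|=|\FF(n)|$ by conjugation, and $|\FF(n)|=|\A(n)|$ by peeling off a rectangle and invoking the Xiong--Keith bijection $\xi$. No appeal to Glaisher's bijection is needed.

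First I would handle $|\DD(n)|=|\FF(n)|$. Since conjugation is already a bijection $\F(n)\to\D(n)$, it suffices to check that it carries $\FF(n)$ onto $\DD(n)$. The key observation is that for any partition $\mu$ the multiplicity of the value $v$ in $\mu$ equals $\mu'_v-\mu'_{v+1}$, i.e.\ it is exactly the $v$-th difference of consecutive parts of the conjugate $\mu'$ (with the convention $\mu'_{\mu_1+1}=0$, so that the smallest part of $\mu'$ is itself counted as a difference). Hence the multiset of multiplicities of $\mu$ coincides with the multiset of differences of consecutive parts of $\mu'$. Consequently $\mu$ has exactly one value occurring at least $r$ times and all others at most $r-1$ times if and only if $\mu'$ has exactly one difference of consecutive parts at least $r$ and all others at most $r-1$; that is, $\mu\in\DD(n)$ iff $\mu'\in\FF(n)$. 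Since conjugation is an involution, this gives the bijection.

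For $|\FF(n)|=|\A(n)|$ I would build a bijection $\Psi\colon\FF(n)\to\A(n)$ from $\xi$. Given $\lambda\in\FF(n)$, let $j$ be the unique index with $\lambda_j-\lambda_{j+1}\geq r$ and set $c=\lfloor(\lambda_j-\lambda_{j+1})/r\rfloor\geq1$. Subtracting $cr$ from each of $\lambda_1,\dots,\lambda_j$ (and discarding any parts that become $0$) produces a partition $\bar\lambda$, and I would first verify $\bar\lambda\in\F(n-crj)$: the difference at position $j$ drops to $(\lambda_j-\lambda_{j+1})\bmod r\leq r-1$, every other difference is unchanged and already at most $r-1$, and the smallest surviving part stays at most $r-1$. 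Then I set $\Psi(\lambda)=\xi(\bar\lambda)\cup\big((cr)^j\big)$. Because $\xi(\bar\lambda)\in\OO$ has no part divisible by $r$, the partition $\Psi(\lambda)$ has exactly one value divisible by $r$, namely $cr$, occurring $j$ times, so $\Psi(\lambda)\in\A(n)$. The inverse reverses this: given $\psi\in\A(n)$ with unique $r$-divisible value $p=cr$ of multiplicity $j$, remove the $j$ copies of $p$ to get $\rho\in\OO$, form $\bar\lambda=\xi^{-1}(\rho)\in\F$, pad $\bar\lambda$ with zero parts up to length $j$ if necessary, and add $cr$ to its top $j$ parts. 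In effect both sides split as a size-preserving product, $\FF(n)\leftrightarrow\F\times\{(c,j):c,j\geq1\}$ via the rectangle $(cr)^j$ of size $crj$ and $\A(n)\leftrightarrow\OO\times\{(c,j):c,j\geq1\}$ via the block of $j$ copies of $cr$, and $\xi$ matches the two flat/regular factors. Note that only the bijectivity and size-preservation of $\xi$ are used here, not its refined statistic-preservation.

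The main obstacle is making the decomposition of $\FF(n)$ a genuinely free product. The subtle point is the boundary behaviour when the large gap sits at the bottom ($j=\ell(\lambda)$) and the smallest parts are themselves divisible by $r$: subtracting $cr$ then annihilates those parts, so the multiplicity $j$ of the rectangle can exceed $\ell(\bar\lambda)$, and zero-padding is needed to invert. Here the single-gap hypothesis does the work: it forces $\lambda_i-\lambda_{i+1}\leq r-1$ for every $i<j$, which is exactly what guarantees that the surviving smallest part of $\bar\lambda$ is at most $r-1$ and that the index $j$ and value $c$ are recovered unambiguously from $\Psi(\lambda)$ (the position of the unique $r$-divisible value and its multiplicity). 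Once this bookkeeping is checked, bijectivity of $\Psi$ is immediate from bijectivity of $\xi$, and the corollary $|\DD(n)|=|\A(n)|$ follows by composing the two bijections.
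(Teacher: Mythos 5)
Your proposal is correct and follows essentially the same route as the paper's proof of Theorem \ref{Thm-r,1}: conjugation gives $|\DD(n)|=|\FF(n)|$, and the map $\FF(n)\to\A(n)$ you construct (subtract the rectangle $((cr)^j)$ at the unique large gap, apply $\xi$, then insert $j$ parts equal to $cr$) is precisely the paper's bijection $\varphi$ with $k=c$, $i=j$. Your extra bookkeeping on the edge cases --- parts annihilated when the gap sits at the bottom, and zero-padding in the inverse --- is sound and makes explicit what the paper absorbs into its conventions for the partition operations $-$ and $+$.
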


\begin{proof}[Proof of Theorem \ref{Thm-r,1}]

 Conjugation is a bijection between $\DD(n)$ and $\FF(n)$. Thus $|\DD(n)| = |\FF(n)|$.

Next, we adapt the Xiong-Keith bijection to obtain a bijection  $\varphi:\FF(n)\to\A(n)$.

Begin with a partition $\lambda= (\lambda_1, \lambda_2, \cdots, \lambda_l)$ in $\FF(n)$. Then there is exactly one consecutive difference in $\lambda$ that is greater than or equal to $r$, say $\lambda_i-\lambda_{i+1}\geq r$. Write $\lambda_i-\lambda_{i+1}$ as $rk + d$ where $0 \leq d < r$ and let $\tilde \lambda=\lambda-((rk)^i)$. 
Then $\tilde\lambda \in \F(n - irk)$. The partition $\tilde\lambda$ is  $r$-flat because all of the consecutive differences in $\tilde{\lambda}$ are equal to the corresponding consecutive differences in $\lambda$ except $\tilde \lambda_i-\tilde \lambda_{i+1}=d<r$. 

Using the Xiong-Keith bijection, we map $\tilde\lambda \in \F(n - irk)$ to $\tilde\mu=\xi(\tilde \lambda) \in \OO(n - irk)$. Finally, let $\mu =\tilde\mu\cup ((rk)^i)$, i.e., insert $i$ parts equal to $rk$ into $\tilde\mu$. Set $\varphi(\lambda)=\mu$. Then  $\varphi(\lambda)\in\A(n)$. We illustrate the   mapping $\varphi$ in Example \ref{eg1} below. 


To obtain the inverse map, we simply reverse the process. Begin with $\mu \in \A(n)$. Then there is one part of $\mu$ that is divisible by $r$. Suppose the part divisible by $r$  is $rk$ with $k > 0$ and  it occurs  $j>0$ times in $\mu$. 
Let $\tilde \mu$ be the partition obtained from $\mu$ by removing  all $j$ parts equal to  $rk$. Then $\tilde\mu \in \OO(n - jrk)$. 

Using the inverse of the Xiong-Keith bijection, we map $\tilde \mu \in \OO(n - jrk)$ to $\tilde \lambda=\xi^{-1}(\tilde\mu) \in \F(n - ijk)$. Finally, let $\lambda=\tilde\lambda +((rk)^j)$, i.e., add $rk$ to each of the first $j$ parts of $\tilde\lambda$. Since $\lambda_j - \lambda_{j + 1}\geq r$, we have $\lambda \in \FF(n)$.   Then $\varphi^{-1}(\mu)=\lambda$.  
\end{proof}
\begin{example}\label{eg1}

Consider ${\lambda} = (27, 24, 20, 15, 13, 10, 6, 5, 2) \in \mathcal F_{1, 5}(122)$ with $i=3$. We show the $5$-modular diagram  of ${\lambda}$ below along with the highlighted cells that will be removed to obtain $\tilde \lambda$. \medskip
\begin{center}
${\lambda} = $  \small{ \ytableausetup{nosmalltableaux}
  \begin{ytableau}
   *(white) 5 & *(white) 5  &*(white) 5  &*(green) 5 &*(white) 5 &*(white) 2\\
 *(white) 5 & *(white) 5  &*(white) 5  &*(green) 5  &*(white) 4\\
 *(white) 5 & *(white) 5  &*(white) 5  &*(green) 5 \\
  *(white) 5 & *(white) 5  &*(white) 5  \\
    *(white) 5 & *(white) 5  &*(white) 3  \\
        *(white) 5 & *(white) 5   \\
 *(white) 5 & *(white) 1   \\
  *(white) 5   \\
    *(white) 2   \\
  \end{ytableau}}
\end{center}
\medskip
Then ${\lambda}$ maps to $\tilde\lambda = (22, 19, 15, 15, 13, 10, 6, 5, 2) \in \mathcal{F}_5(107)$ after the block removal. As can be seen in \cite[pg. 562-563]{XK19}, under the Xiong-Keith bijection, $\tilde\lambda$ maps to $\tilde\mu=\xi(\tilde \lambda) = (32, 24, 23, 16, 12) \in \mathcal{O}_5(107)$. Finally, add $3$ parts of size $5$  to $\tilde\mu$ to obtain  $\mu \in\mathcal{O}_{1, 5}(122)$.\medskip

\begin{center}
$ {\mu} = $ \small{\ytableausetup{nosmalltableaux}
  \begin{ytableau}
   *(white) 5 & *(white) 5  &*(white) 5  &*(white) 5 &*(white) 5 &*(white) 5 &*(white) 2\\
 *(white) 5 & *(white) 5  &*(white) 5  &*(white) 5  &*(white) 4\\
 *(white) 5 & *(white) 5  &*(white) 5  &*(white) 5 &*(white) 3\\
  *(white) 5 & *(white) 5  &*(white) 5  &*(white) 1\\
    *(white) 5 & *(white) 5  &*(white) 2  \\
        *(green) 5  \\
     *(green) 5  \\
  *(green) 5   \\
  \end{ytableau}}
\end{center}

\end{example}

We are now ready to complete the combinatorial proof of Theorem \ref{main}.

\begin{proof}[Combinatorial Proof of Theorem \ref{main}]

We prove that $E_{r,t}(n) = |\FF(n)|$. Then, Theorem \ref{Thm-r,1} implies that $E_{r,t}(n) = |\DD(n)| = |\A(n)|$. 

Recall that $E_{r,t}(n)$ is the excess in the number of parts congruent to $t \pmod r$ in all partitions in $\OO(n)$ over the number of different parts that appear at least $t$ times in a partition, counted in all partitions in $\D(n)$. 

Denote by $\ost(n)$ the set of partitions in $\OO(n)$ with exactly one part congruent to $t \pmod r$ marked. Note that if $\lambda \in \OO(n)$ with $\lambda_i=\lambda_j\equiv t \pmod r$ and $i\neq j$, then the partition with with the part $\lambda_i$ marked is different from the partition with part $\lambda_j$ marked.

Denote by $\fbar(n)$ the set of partition in $\F(n)$ with exactly one part overlined and part $\lambda_i$ may be overlined only if $\lambda_i-\lambda_{i+1}\geq t$ (where $\lambda_{i+1}=0$ if $\lambda_i$ is the last part). Note that the overlining marks a consecutive difference greater than or equal to $t$. Via conjugation, the overlining marks the last occurrence of a part that is repeated at least $t$ times in the corresponding partition in $\D(n)$. 

Then $$|\ost(n)|=\sum_{\lambda\in \OO(n)}\ell_t(\lambda) \mbox{\ \ and\ \ } |\fbar(n)|=\sum_{\lambda\in \D(n)}\overline\ell_t(\lambda).$$

To prove that $E_{r,t}(n)= |\FF(n)|$, we create a bijection between $\ost(n)$ and $\fbar(n)\sqcup\FF(n)$. We achieve this  by creating bijections $$\psi_1: \fbar(n)\sqcup\FF(n) \to \P(n)$$ and $$\psi_2: \ost \to \P(n),$$ where $$\P(n)=\{ (\mu, ((ar + t)^i)) \mid \mu \in \F(n - i(ar + t)), a \geq 0, i > 0\}.$$

To define $\psi_1$, start with $\nu \in \fbar(n) \sqcup \FF(n)$. Then we have two cases.

Case 1: $\nu \in \fbar(n)$. Suppose the overlined part is $\nu_i$. Then  $\nu_i - \nu_{i + 1}\geq t$. Let $\mu=\nu-(t^i)$. Note that $\mu$ is $r$-flat and $\mu_i - \mu_{i+1} < r - t$. Define $\psi_1(\nu)=(\mu, (t^i))$.

For example, if $\nu = (4, \overline{3}, 1) \in \overline{\mathcal{F}}_{3,2}(8)$, then $\mu = (2, 1, 1)$ and $(t^i) = (2^2)$. We show the mapping below, highlighting the removed cells. \medskip

\begin{center}
\tiny{ \ytableausetup
  {aligntableaux=top}
\ydiagram[*(green)]
  {1+2,1+2}
*[*(white)]{4,3,1}
 \  $\mapsto$ \Bigg( \ydiagram[*(green)]
  {}
*[*(white)]{2,1, 1} \ , \ydiagram[*(green)]
  {}
*[*(white)]{2,2} \  \Bigg) \\}
\end{center}

Case 2: $\nu \in  \FF(n)$. Then there is a single consecutive difference $\nu_{i} - \nu_{i+1}$ that is greater than or equal to $r$. Write $\nu_{i} - \nu_{i+1}-t$ as $ar +  d$ where $a \geq 0$ and $ 0 \leq d<r$. Then, $\nu_{i} - \nu_{i+1}=ar+t+d$. Let $\mu=\nu-((ar+t)^i)$.  Note that $\mu$ is $r$-flat, and if $a = 0$, then $\mu_i - \mu_{i + 1} \geq r - t$. Define $\psi_1(\nu)=(\mu, (ar+t)^i)$.

For example, if  $\nu = (5, 2, 1) \in \mathcal{F}_{1, 3}(8)$ and $t = 2$, then $\mu = (3, 2, 1)$ and $(t^i) = (2^1)$. We show the mapping below, highlighting the removed cells. \medskip

\begin{center}
\tiny{ \ytableausetup
  {aligntableaux=top}
\ydiagram[*(green)]
  {3 + 2}
*[*(white)]{5, 2, 1}
 \  $\mapsto$ \Bigg( \ydiagram[*(green)]
  {}
*[*(white)]{3,2, 1} \ , \ydiagram[*(green)]
  {}
*[*(white)]{2} \  \Bigg) \\}
\end{center}

Since in case 1 we have $\mu_i - \mu_{i+1} < r - t$ and in case 2, if $a = 0$,  $\mu_i - \mu_{i + 1} \geq r - t$, it follows that  $\psi_1(\fbar(n))\cap \psi_1(\FF(n))=\emptyset$.

The inverse of $\psi_1$ maps $(\mu, ((ar + t)^i)) \in \P(n)$ to $\nu=\mu+((ar+t)^i))$. If $a \neq 0$, then $\nu_i - \nu_{i + 1} \geq r$, and $\nu \in \FF(n)$. If $a = 0$, then either $t \leq \nu_i - \nu_{i + 1} <r$ and we overline $\nu_i$ to obtain $\nu \in \fbar(n)$, or $\nu_i - \nu_{i + 1} \geq r$ and $\nu \in \FF(n)$.

To define $\psi_2$, start with  $\lambda\in \ost(n)$. Then there is one marked part of size $ar + t$ with $a \geq 0$. Suppose the marked part is the $i$th part of size $ar+t$. Let $\eta$ be the  partition obtained from $\lambda$ by removing $i$ parts equal to $ar+t$ (including the marking). Then $\eta \in \OO(n-i(ar+t))$. Let $\mu=\xi^{-1}(\eta)$ be the image of $\eta$ under the inverse of the Xiong-Keith bijection. Then $\mu \in \F(n - i(ar + t))$ and $(\mu, ((ar+t)^i))\in \P(n)$. 

\begin{example} Consider $\lambda = (32, 24, 23, 16, 12, 7, 7^*) \in \mathcal{O}^*_{5, 2}(121)$.  \medskip
\begin{center}
$\lambda = $   \small{\ytableausetup{nosmalltableaux}
  \begin{ytableau}
   *(white) 5 & *(white) 5  &*(white) 5  &*(white) 5 &*(white) 5 &*(white) 5  &*(white) 2\\
 *(white) 5 & *(white) 5  &*(white) 5  &*(white) 5  &*(white) 4\\
 *(white) 5 & *(white) 5  &*(white) 5  &*(white) 5 &*(white) 3\\
  *(white) 5 & *(white) 5  &*(white) 5  &*(white) 1\\
      *(white) 5 & *(white) 5  &*(white) 2  \\
    *(white) 5 &*(white) 2  \\
       *(white) 5^* &*(white) 2^*  \\
  \end{ytableau}}
\end{center}
\medskip

Then $\lambda \mapsto \eta = (32, 24, 23, 16, 12) \in \mathcal{O}_5(107)$.  \medskip
\begin{center}
$\eta = $  \small{ \ytableausetup{nosmalltableaux}
  \begin{ytableau}
   *(white) 5 & *(white) 5  &*(white) 5  &*(white) 5 &*(white) 5 &*(white) 5  &*(white) 2\\
 *(white) 5 & *(white) 5  &*(white) 5  &*(white) 5  &*(white) 4\\
 *(white) 5 & *(white) 5  &*(white) 5  &*(white) 5 &*(white) 3\\
  *(white) 5 & *(white) 5  &*(white) 5  &*(white) 1\\
    *(white) 5 & *(white) 5  &*(white) 2  \\
  \end{ytableau}}
  \end{center}
  \medskip
  
As can be seen in \cite[pg. 562-563]{XK19}, under the Xiong-Keith bijection, $\eta \mapsto \mu=\xi^{-1}(\eta) = (22, 19, 15, 15, 13, 10, 6, 5, 2) \in \mathcal{F}_5(107)$. \medskip
  \begin{center}
  $\mu = $  \small{ \ytableausetup{nosmalltableaux}
  \begin{ytableau}
   *(white) 5 & *(white) 5  &*(white) 5  &*(white) 5  &*(white) 2\\
 *(white) 5 & *(white) 5  &*(white) 5   &*(white) 4\\
 *(white) 5 & *(white) 5  &*(white) 5  \\
  *(white) 5 & *(white) 5  &*(white) 5  \\
   *(white) 5 & *(white) 5  &*(white) 3  \\
    *(white) 5 & *(white) 5   \\
    *(white) 5 & *(white) 1  \\
    *(white) 5 \\
      *(white) 2 \\
  \end{ytableau}}
  \end{center}
  \medskip

So $\lambda \mapsto (\mu , (7^2)) \in \mathcal{P}_{5, 2}(121)$.

\end{example}

The inverse of $\psi_2$ maps $(\mu, ((ar + t)^i)) \in \P(n)$ to $\nu=\mu\cup((ar+t)^i)$. Then, the partition obtained by marking the $i$th part equal to $ar+t$ in $\nu$ is in $\ost(n)$. 

Therefore, $|\ost(n)|=|\P(n)|=|\fbar(n)|+|\FF(n)|,$ which finishes the combinatorial  proof of the theorem. 
\end{proof}

Next, we show combinatorially that the  first Beck-type identity \eqref{bg1} follows from Theorem \ref{main}.  Therefore, we obtain a new combinatorial proof of \eqref{bg1}.

\begin{corollary} For all $n\geq 0$ and  $r\geq 2$ we have $$|\A(n)|=|\DD(n)|=\ds\frac{1}{r-1}b_r(n).$$

\end{corollary}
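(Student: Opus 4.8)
The plan is to sum the third Beck-type identity of Theorem \ref{main} over all admissible residues $t$ and to recognize each of the two resulting sums as a count of \emph{all} parts. By Theorem \ref{main}, for every $t$ with $1\leq t\leq r-1$ we have $E_{r,t}(n)=|\A(n)|=|\DD(n)|$, and this common value is independent of $t$. Hence summing over $t$ immediately gives
$$\sum_{t=1}^{r-1} E_{r,t}(n)=(r-1)\,|\A(n)|.$$
It then remains only to identify the left-hand side with $b_r(n)$, after which dividing by $r-1$ yields the corollary.

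To compute $\sum_{t=1}^{r-1}E_{r,t}(n)$, I would start from the defining expression $E_{r,t}(n)=\sum_{\lambda\in\OO(n)}\ell_t(\lambda)-\sum_{\lambda\in\D(n)}\overline\ell_t(\lambda)$ and interchange the summation over $t$ with the summation over partitions. For the $\OO$-contribution, fix $\lambda\in\OO(n)$: since no part of $\lambda$ is divisible by $r$, each part is congruent to exactly one residue $t\in\{1,\dots,r-1\}$ modulo $r$, so $\sum_{t=1}^{r-1}\ell_t(\lambda)=\ell(\lambda)$. Summing over $\lambda$ produces $\sum_{\lambda\in\OO(n)}\ell(\lambda)$. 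For the $\D$-contribution, fix $\lambda\in\D(n)$ and consider a distinct part value $m$ of multiplicity $k$; by definition of $\D(n)$ we have $1\leq k\leq r-1$. This part is counted in $\overline\ell_t(\lambda)$ exactly for $t=1,2,\dots,k$, hence contributes $k$ to $\sum_{t=1}^{r-1}\overline\ell_t(\lambda)$. Adding the multiplicities over all distinct part values of $\lambda$ gives $\ell(\lambda)$, so $\sum_{t=1}^{r-1}\overline\ell_t(\lambda)=\ell(\lambda)$, and summing over $\lambda$ produces $\sum_{\lambda\in\D(n)}\ell(\lambda)$.

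Combining the two computations gives
$$\sum_{t=1}^{r-1}E_{r,t}(n)=\sum_{\lambda\in\OO(n)}\ell(\lambda)-\sum_{\lambda\in\D(n)}\ell(\lambda)=b_r(n),$$
and equating this with $(r-1)|\A(n)|$ from the first paragraph yields $|\A(n)|=|\DD(n)|=\tfrac{1}{r-1}b_r(n)$. I do not expect a genuine obstacle here; the only point requiring care is the bookkeeping in the $\D$-contribution, namely that the multiplicity cap $k\leq r-1$ imposed by $\D(n)$ exactly matches the summation range $1\leq t\leq r-1$, so that each part is counted with its full multiplicity and nothing is over- or undercounted.
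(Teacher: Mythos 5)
Your proposal is correct and takes essentially the same approach as the paper: it sums the identity of Theorem \ref{main} over $t=1,\dots,r-1$, and identifies $\sum_{t=1}^{r-1}\sum_{\lambda\in \OO(n)}\ell_t(\lambda)$ with $\sum_{\lambda\in \OO(n)}\ell(\lambda)$ and $\sum_{t=1}^{r-1}\sum_{\lambda\in \D(n)}\overline\ell_t(\lambda)$ with $\sum_{\lambda\in \D(n)}\ell(\lambda)$, giving $b_r(n)$. Your careful bookkeeping for the $\D(n)$ side --- a part of multiplicity $k\leq r-1$ is counted precisely for $t=1,\dots,k$ --- is exactly the paper's (terser) observation that each part is counted as many times as its multiplicity.
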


\begin{proof} We have $$\sum_{t=1}^{r-1}E_{r,t}(n)=\sum_{t=1}^{r-1}\left(\sum_{\lambda\in \OO(n)}\ell_t(\lambda)-\sum_{\lambda\in \D(n)}\overline\ell_t(\lambda)\right)=\sum_{\lambda\in \OO(n)}\ell(\lambda)-\sum_{t=1}^{r-1}\sum_{\lambda\in \D(n)}\overline\ell_t(\lambda).$$

Given a partition $\lambda \in \D(n)$, each part of $\lambda$ is counted in $\ds \sum_{t=1}^{r-1}\sum_{\lambda\in \D(n)}\overline\ell_t(\lambda)$ as many times as its multiplicity. Thus $\ds \sum_{t=1}^{r-1}\sum_{\lambda\in \D(n)}\overline\ell_t(\lambda)= \sum_{\lambda\in \D(n)}\ell(\lambda)$ and 
$$\sum_{t=1}^{r-1}E_{r,t}(n) =b_r(n).$$ On the other hand, from Theorem \ref{main}, $$\sum_{t=1}^{r-1}E_{r,t}(n)=(r-1)|\A(n)|=(r-1)|\DD(n)|.$$
\end{proof}

\section{A Second Beck-type identity}\label{beck2}

Let $\T(n)$ be the subset of $\DD(n)$ consisting of partitions of $n$ in which exactly one part is repeated more than $r$ times but less than $2r$ times. Denote by $b'_r$  the difference between the total number of different parts  in all partitions in $\D(n)$ and the total number of different parts in all partitions in $\OO(n)$ (i.e., in each partition, parts are counted without multiplicity). If we denote by $\overline\ell(\lambda)$ the number of different parts in $\lambda$, then $$b'_r(n)=\sum_{\lambda\in \D(n)} \overline\ell(\lambda)-\sum_{\lambda\in \OO(n)} \overline\ell(\lambda).$$

The following theorem, referred to as a  second Beck-type identity, was proved first analytically by Andrews \cite{A17} for the case $r=2$. For the case $r=2$, a combinatorial proof based on Glaisher's bijection was provided in \cite{BB19}. For general $r$, Yang \cite{Y18} provided a combinatorial proof based on Glaisher's bijection. Here we give a new combinatorial proof using the Xiong-Keith bijection. 

\begin{theorem} \label{second beck} For all integers $n, r$ with $n\geq 0$ and $r\geq 2$ we have $$b'_r(n)=|\T(n)|.$$

\end{theorem}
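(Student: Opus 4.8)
The plan is to mimic the structure of the combinatorial proof of Theorem \ref{main}, working with \emph{marked} partitions. Writing $\sum_{\lambda\in\D(n)}\overline\ell(\lambda)$ as the number of partitions in $\D(n)$ carrying one marked distinct part value, and $\sum_{\lambda\in\OO(n)}\overline\ell(\lambda)$ as the number of partitions in $\OO(n)$ carrying one marked distinct part value, the identity $b'_r(n)=|\T(n)|$ says precisely that these two marked sets differ in cardinality by $|\T(n)|$. I would therefore aim to construct an explicit bijection between the set of $\OO(n)$-partitions with one marked distinct value together with $\T(n)$, on one side, and the set of $\D(n)$-partitions with one marked distinct value, on the other.

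First I would transport the $\D$-side into the world of flat partitions where $\xi$ lives. The number of distinct part values is invariant under conjugation, since the removable corners of a Young diagram biject with those of its transpose; as conjugation is a bijection $\D(n)\to\F(n)$, a $\D$-partition with a marked distinct value corresponds to a flat partition with a marked positive consecutive difference (with $\lambda_{\ell+1}=0$). The $\OO$-side and $\F$-side are then linked by the Xiong--Keith bijection $\xi$, exactly as in Theorem \ref{Thm-r,1}. The core operation would be the strip-and-reinsert device used for $\psi_2$ and for $\varphi$: starting from an $r$-regular partition with a marked value $m$, remove all copies of $m$, apply $\xi^{-1}$ to the remainder, and then reinsert one copy of $m$ into the resulting flat (hence, after conjugation, $\D$) partition.

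The essential subtlety, and the place where $\T(n)$ enters, is that reinserting the marked value $m$ is legal only while its multiplicity stays below $r$; the configurations in which a reinsertion would push a multiplicity from $r-1$ up to $r$ have no valid $\D$-image, and these overflow cases are exactly the partitions of $\T(n)$, in which a single part is repeated between $r+1$ and $2r-1$ times. At the level of generating functions this is the telescoping $\sum_{m\geq 1}(q^m-q^{rm})=\sum_{r\nmid k}q^k$: the term $-q^{rm}$ attached to a value $m$ cancels the term $+q^{rm}$ attached to the value $rm$, the survivors $\sum_{r\nmid k}q^{k}$ feed the $\OO$-side count, and the uncancelled multiples of $r$ are absorbed by $\T(n)$. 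I would make this precise through a case analysis parallel to the two cases of $\psi_1$, one case landing among the marked $\D$-partitions and the other producing the over-repeated part of a $\T(n)$-partition.

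The main obstacle is that, in contrast with the statistic $\ell_t$ in Theorem \ref{main}, the number of distinct parts is \emph{not} preserved by $\xi$; for example, with $r=2$ one computes $\xi(3,2,1)=(3,3)$, collapsing three distinct parts to one. Hence the marked value cannot simply be carried along under $\xi$: it must be stripped before applying $\xi^{-1}$ and rebuilt afterward, and the entire difficulty is to arrange this reconstruction so that the multiplicity-$r$ threshold is crossed exactly on a set that is in bijection with $\T(n)$. Establishing well-definedness and bijectivity at this threshold, and verifying that the overflow configurations match $\T(n)$ one-for-one (equivalently, realizing the above telescoping combinatorially rather than formally), is the step I expect to demand the most care.
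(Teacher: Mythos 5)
Your overall framework --- counting $\sum_{\lambda\in\D(n)}\overline\ell(\lambda)$ and $\sum_{\lambda\in\OO(n)}\overline\ell(\lambda)$ by partitions with one marked distinct value, and stripping the marked value before applying $\xi^{-1}$ because $\xi$ does not preserve the number of distinct parts (your example $\xi(3,2,1)=(3,3)$ for $r=2$ is correct, and this is exactly why the paper also strips first) --- matches the paper's setup. But there are two concrete gaps. First, your core operation is not well defined: removing \emph{all} copies of the marked value $m$ and later reinserting \emph{one} copy changes the size of the partition whenever $m$ has multiplicity $k>1$ (you lose $(k-1)m$). The paper instead removes a \emph{single} copy (the overlined last occurrence) and never reinserts it: the removed value is carried along as a detached column $(1^m)$, so all three sets are mapped into a common universe of pairs $\{(\mu,(1^i)) : \mu\in\F(n-i)\}$, with $\psi_o$ using $\xi^{-1}$ on the $\OO$-side, $\psi_d$ using conjugation on the $\D$-side, and $\T(n)$ handled not as an ``overflow'' byproduct of reinsertion but by its own map $\psi_t$, which removes exactly $r$ copies of the over-repeated part $j$ and tags with $(1^{rj})$.

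Second --- and this is the decisive point --- you explicitly leave unproven the step on which the theorem rests, namely realizing the telescoping $\sum_{m\geq 1}(q^m-q^{rm})$ combinatorially at the multiplicity-$r$ threshold. In the paper's coordinates this step becomes a short explicit bijection rather than a delicate case analysis: the image of $\psi_d$ omits exactly the pairs $A=\{(\mu,(1^i)) : \mu_i-\mu_{i+1}=r-1\}$ (these are your forbidden reinsertions, where part $i$ would reach multiplicity $r$), while the images of $\psi_o$ and $\psi_t$ jointly omit exactly $B=\{(\mu,(1^i)) : i\equiv 0 \pmod{r},\ \mu_{i/r}-\mu_{(i/r)+1}=0\}$. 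The desired identity $|A_d(n)|=|A_o(n)|+|A_t(n)|$ then reduces to $|A|=|B|$, which is proved by the map $\zeta\colon (\mu,(1^j))\mapsto \bigl(\mu-((r-1)^j),(1^{rj})\bigr)$: one removes a $j\times(r-1)$ strip from $\mu$ and lengthens the tag column from $j$ to $rj$ cells, the combinatorial form of $q^j\cdot q^{(r-1)j}=q^{rj}$. Without this construction (or an equivalent one), your proposal correctly locates the difficulty but does not close it, so as it stands it is a proof plan rather than a proof.
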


\begin{proof} Denote by $\overline{\mathcal O}_r(n)$, respectively $\overline{\mathcal D}_r(n)$,  the set of partitions in $\OO(n)$, respectively $\D(n)$,  with exactly one part overlined. Only the last occurrence of a part may be overlined. Then $$|\overline{\mathcal O}_r(n)|=\sum_{\lambda\in \OO(n)} \overline\ell(\lambda)$$ and 
$$|\overline{\mathcal D}_r(n)|=\sum_{\lambda\in \D(n)} \overline\ell(\lambda).$$

Next, we create bijections between $\overline{\mathcal O}_r(n)$,  $\overline{\mathcal D}_r(n)$ and $\T(n)$ respectively, and certain sets of pairs of partitions $(\mu, (1^i))$, where $\mu$ is an $r$-flat partition. 

Start with  $\lambda\in \overline{\mathcal O}_r(n)$ and suppose the overlined part is equal to $i \not \equiv 0 \pmod r$.    Let $\nu$ be the partition obtained from $\lambda$ by removing the overlined part.  Define $\mu=\xi^{-1}(\nu)\in \F(n-i)$. Set $\psi_o(\lambda)=(\mu, (1^i))$. This gives a   bijection  $$\psi_o:\overline{\mathcal O}_r(n) \to A_o(n):=\{(\mu, (1^i)) \mid \mu\in \F(n-i), i \not \equiv 0 \tpmod r\}.$$ 

\begin{example} Consider $\lambda = (32, 24, 23, 16, \overline{16}, 12) \in \overline{\mathcal O}_5(123)$.  \medskip
\begin{center}
$\lambda = $  \small{ \ytableausetup{nosmalltableaux}
  \begin{ytableau}
   *(white) 5 & *(white) 5  &*(white) 5  &*(white) 5 &*(white) 5 &*(white) 5  &*(white) 2\\
 *(white) 5 & *(white) 5  &*(white) 5  &*(white) 5  &*(white) 4\\
 *(white) 5 & *(white) 5  &*(white) 5  &*(white) 5 &*(white) 3\\
  *(white) 5 & *(white) 5  &*(white) 5  &*(white) 1\\
    *(white) \textcolor{red}{\overline{5}} & *(white) \textcolor{red}{\overline{5}}   &*(white) \textcolor{red}{\overline{5}}   &*(white) \textcolor{red}{\overline{1}} \\
      *(white) 5 & *(white) 5  &*(white) 2  \\
  \end{ytableau}}
\end{center}
\medskip

Then $i = 16$ and $\nu = (32, 24, 23, 16, 12) \in \mathcal{O}_5(107)$. 

As can be seen in \cite[pg. 562-563]{XK19}, under the Xiong-Keith bijection, $\nu$ maps to $\mu = \xi^{-1}(\nu) = (22, 19, 15, 15, 13, 10, 6, 5, 2) \in \mathcal{F}_5(107).$ So $\lambda \mapsto (\mu, (1^{16})) \in A_o(123)$. 

\end{example}

Similarly, start with $\lambda\in \overline{\mathcal D}_r(n)$ and suppose the overlined part is equal to $i$.  Let $\nu$ be the partition obtained from $\lambda$ by removing the overlined part. Define $\mu=\nu'$, the conjugate of $\nu$. It follows that $\mu\in \F(n-i)$ and $\m_i-\mu_{i+1}<r-1$. Set $\psi_d(\lambda)=(\mu, (1^i))$. This gives a bijection  $$\psi_d:\overline{\mathcal D}_r(n) \to A_d(n):=\{(\mu, (1^i)) \mid \mu\in \F(n-i), \m_i-\mu_{i+1}<r-1\}.$$

\begin{example} Consider $\lambda = (20, 20, \overline{20}, 17, 13, 10, 10, 10, 3) \in \overline{\mathcal D}_5(123)$.  \medskip
\begin{center}
$\lambda = $  \small{ \ytableausetup{nosmalltableaux}
  \begin{ytableau}
   *(white) 5 & *(white) 5  &*(white) 5  &*(white) 5 \\
      *(white) 5 & *(white) 5  &*(white) 5  &*(white) 5  \\
       *(white) \textcolor{red}{\overline{5}} & *(white) \textcolor{red}{\overline{5}}   &*(white) \textcolor{red}{\overline{5}}   &*(white) \textcolor{red}{\overline{5}} \\
 *(white) 5 & *(white) 5  &*(white) 5  &*(white) 2\\
      *(white) 5 & *(white) 5  &*(white) 3  \\
            *(white) 5 & *(white) 5  \\
                  *(white) 5 & *(white) 5   \\
                       *(white) 5 & *(white) 5   \\
                        *(white) 3  \\
  \end{ytableau}}
\end{center}
\medskip

Then $i = 20$ and $\nu = (20, 20, 17, 13, 10, 10, 10, 3) \in \mathcal{D}_5(103)$. Under conjugation, $\nu$ maps to $\mu = \nu' = (8^3, 7^7, 4^3, 3^4, 2^3) \in \mathcal{F}_5(103)$. So $\lambda \mapsto (\mu, (1^{20})) \in A_d(123)$. 

\end{example}

Finally, start with $\lambda\in \T(n)$ and suppose that the part occurring  more than $r$ times but less than $2r$ times has size $j$. Let $\nu$ be the partition obtained from $\lambda$ by removing $r$ parts equal to $j$. Let $i=rj$. Define $\mu=\nu'$, the conjugate of $\nu$. It follows that $\mu\in \F(n-i)$ and $0<\m_j-\mu_{j+1}$. Set $\psi_t(\lambda)=(\mu, (1^i))$. This gives a bijection  $$\psi_t:\T(n) \to A_t(n):=\{(\mu, (1^i)) \mid \mu\in \F(n-i), i\equiv 0 \tpmod r, 0<\m_{i/r}-\mu_{(i/r)+1}\}.$$

\begin{example} Consider $\lambda = (20, 17, 13, 10, 10, 10, 10, 10, 10, 10, 3) \in {\mathcal T}_5(123)$.  \medskip
\begin{center}
$\lambda = $ \small{  \ytableausetup{nosmalltableaux}
  \begin{ytableau}
   *(white) 5 & *(white) 5  &*(white) 5  &*(white) 5 \\
      *(white) 5 & *(white) 5  &*(white) 5  &*(white) 2  \\
          *(white) 5 & *(white) 5   &*(white) 3  \\
            *(white) 5 & *(white) 5\\
                        *(white) 5 & *(white) 5\\
                                    *(white) 5 & *(white) 5\\
                                                *(white) 5 & *(white) 5\\
                                                            *(white) 5 & *(white) 5\\
                                                                        *(white) 5 & *(white) 5\\
      *(white) 5 & *(white) 5 \\
                        *(white) 3  \\
  \end{ytableau}}
\end{center}
\medskip

Then $j = 10,$ $i = 5(10) = 50,$ and $\nu = (20, 17, 13, 10, 10, 3) \in \mathcal{D}_5(73)$. Under conjugation, $\nu$ maps to $\mu = \nu' = (6^3, 5^7, 3^3, 2^4, 1^3) \in \mathcal{F}_5(73)$. So $\lambda \mapsto (\mu, (1^{50})) \in A_t(123)$. 

\end{example}

Our goal is to show that $|A_t(n)|+|A_o(n)|=|A_d(n)|$. 
 Notice that $A_t(n)\cap A_o(n)=\emptyset$ and \begin{multline*}A_t(n)\cup A_o(n)= \{(\mu, (1^i)) \mid \mu\in \F(n-i)\} 
 \setminus \\  \{(\mu, (1^i)) \mid \mu\in \F(n-i), i\equiv 0 \tpmod r, \m_{i/r}-\mu_{(i/r)+1}=0  \}.\end{multline*}
 
 Thus, to show that $|A_t(n)|+|A_o(n)|=|A_d(n)|,$ we need to show that the sets $$ A:=\{(\mu, (1^i)) \mid \mu\in \F(n-i), \m_i-\mu_{i+1}=r-1\}$$ and $$B:=\{(\mu, (1^i)) \mid \mu\in \F(n-i), i\equiv 0 \tpmod r, \m_{i/r}-\mu_{(i/r)+1}=0  \}$$ are equinummerous. 
 
 We create a bijection $\zeta:A\to B$ as follows. Start with $(\mu, (1^j))\in A$. Then $\m_j-\mu_{j+1}=r-1$. Let $\nu=\mu-((r-1)^j)$. We have $\nu_j-\nu_{j+1}=0$. Let $\zeta((\mu, (1^j)))=(\nu, (1^{rj}))\in B$. 
 
 Conversely, if $(\nu, (1^i))\in B$, then $i=rj$ for some $j>0$ and $\nu_j-\nu_{j+1}=0$. Let $\mu=\nu+((r-1)^j)\in \F(n-j)$ and $\mu_j-\mu_{j+1}=r-1$. Then $\zeta^{-1}((\nu, (1^i)))=(\mu, (1^j))\in A$. 
 
 This completes the combinatorial proof of Theorem \ref{second beck}.

\end{proof}

\bigskip


\end{document}